\newtheorem{theorem}{Theorem}[section]
\newtheorem{remark}{Remark}[section]
\newcommand{\R}{\mathbb{R}}
\newcommand{\N}{\mathbb{N}}
\newcommand{\Z}{\mathbb{Z}}
\newcommand{\nn}{\nonumber}
\newcommand{\be}{\begin{equation}}
\newcommand{\ee}{\end{equation}}
\newcommand{\ti}{\tilde}
\newcommand{\E}{\mathrm{e}}
\newcommand{\I}{\mathrm{i}}
\newcommand{\tl}{\mathrm{TL}}
\newcommand{\km}{\mathrm{KM}}
\newcommand*{\mailto}[1]{\href{mailto:#1}{\nolinkurl{#1}}}
\newcommand{\arxiv}[1]{\href{http://arxiv.org/abs/#1}{arXiv:#1}}
\newcommand{\msc}[1]{\href{http://www.ams.org/msc/msc2010.html?t=&s=#1}{#1}}
\numberwithin{equation}{section}
\begin{document}

\title[On uniqueness properties]{On uniqueness properties of solutions of the Toda and Kac--van Moerbeke hierarchies}

\author[I. Alvarez-Romero]{Isaac Alvarez-Romero}
\address{Department of Mathematical Sciences,
Norwegian University of
Science and Technology, NO--7491 Trondheim, Norway}
\email{\mailto{isaac.romero@math.ntnu.no}\\
\mailto{isaacalrom@gmail.com}}

\author[G. Teschl]{Gerald Teschl}
\address{Faculty of Mathematics\\ University of Vienna\\
Oskar-Morgenstern-Platz 1\\ 1090 Wien\\ Austria\\ and International 
Erwin Schr\"odinger Institute for Mathematical Physics\\ 
Boltzmanngasse 9\\ 1090 Wien\\ Austria}
\email{\mailto{Gerald.Teschl@univie.ac.at}}
\urladdr{\url{http://www.mat.univie.ac.at/~gerald/}}

\thanks{{\it Research supported by the Norwegian Research Council project DIMMA 213638.}}
\thanks{Discrete Contin. Dyn. Syst. {\bf 37}, 2259--2264 (2017)}

\keywords{Toda lattice, unique continuation}
\subjclass[2010]{Primary \msc{37K10}, \msc{37K40}; Secondary \msc{35L05}, \msc{37K15}}

\begin{abstract}
We prove that  a solution of the Toda lattice cannot decay too fast at two different times
unless it is trivial. In fact, we establish this result for the entire Toda and Kac--van Moerbeke hierarchies.
\end{abstract}

\maketitle

\section{Introduction}

To set the stage recall the Toda lattice \cite{ta} (in Flaschka's variables \cite{fl1})
\begin{align} \nn
\dot{a}(n,t) &= a(n,t) \Big(b(n+1,t)-b(n,t)\Big), \\ \label{todeqfl}
\dot{b}(n,t) &= 2 \Big(a(n,t)^2-a(n-1,t)^2\Big), \qquad n\in \Z,
\end{align}
where the dot denotes a derivative with respect to $t$.
It is a well-studied physical model and one of the prototypical discrete integrable wave equations.
We refer to the monographs \cite{fad}, \cite{tjac}, \cite{ta} or the review articles \cite{krt}, \cite{taet}
for further information.

Existence and uniqueness for the Cauchy problem in the case of bounded initial data is well known and
so is the solution by virtue of the inverse scattering transform in the case of decaying initial data.
In particular, using the latter it can be show that (sufficiently fast) decaying initial conditions eventually
split into a finite number of solitons plus a decaying dispersive part (see \cite{krt}). Moreover, it is
also known that the Toda lattice preserves certain types of spatial asymptotic behavior \cite{ttd}.
However, this fact is restricted to polynomial or at most exponential type decay. On the other hand,
it is also known that compact support can occur for at most one time \cite{ttd} (see also \cite{KT}) and this clearly raises
the question if this assumption can be weakened to a certain decay instead. In fact, such results
are known for other nonlinear wave equations; see the introduction in \cite{ik} for the case of the nonlinear
Schr\"odinger equation and \cite{ekpv}, \cite{kpv2} for the generalized KdV equation.

It is the purpose of the present paper to fill this gap by establishing the following result:

\begin{theorem}
Let $a_0>0$, $b_0\in\R$ be given constants and
let $a(t), b(t)$ be a solution of the Toda lattice satisfying
\begin{equation}\label{eq:decay}
\sum_{n\in\mathbb{Z}}|n|\big(|a(n,t)-a_0|+|b(n,t)-b_0|\big)<\infty
\end{equation}
for one (and hence for all) $t\in\R$.
Suppose that for two different times $t_0<t_1$ and some constant $\delta>0$, we have
\begin{equation}
\sum_{n\geq M}\big(|a(n,t_j)-a_0|+|b(n,t_j)-b_0|\big)\leq C\frac{1}{M^{(1+\delta)2M}},\quad M>0, \quad j\in\{0,1\}.
\end{equation}
Then
\begin{equation*}
a(n,t)=a_0, \qquad b(n,t)=b_0, \quad (n,t)\in\Z\times\R.
\end{equation*}
\end{theorem}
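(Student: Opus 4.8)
The natural tool here is the inverse scattering transform, and the plan is as follows. Associate to $(a(\cdot,t),b(\cdot,t))$ the Jacobi operator $L(t)$ with off‑diagonal entries $a(n,t)$ and diagonal entries $b(n,t)$, use the uniformization $\lambda=a_0(z+z^{-1})+b_0$, and let $f_\pm(z,n,t)$ be the Jost solutions normalized by $f_\pm(z,n,t)\to z^{\pm n}$ as $n\to\pm\infty$; the first moment condition \eqref{eq:decay} is exactly what is needed for these to exist and for the scattering data $T(z)$, $R_\pm(z,t)$, $\{\zeta_k\}\subset(-1,1)\setminus\{0\}$ to be well behaved. Along the Toda flow $\dot L=[P_2,L]$ with $P_2(t)$ bounded, $T$ and the $\zeta_k$ are conserved, while the right reflection coefficient evolves multiplicatively, $R_+(z,t)=R_+(z,0)\,e^{-2a_0 t(z-z^{-1})}$ (and $R_-$ similarly, with opposite sign). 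It is convenient to encode this via the Wronskians $N(z,t):=W\bigl(f_-(1/z,\cdot,t),f_+(z,\cdot,t)\bigr)$ and $\tilde N(z,t):=W\bigl(f_-(1/z,\cdot,t),f_+(1/z,\cdot,t)\bigr)$: one has $R_+=-N/\tilde N$, $\tilde N$ is independent of $t$, and $N(z,t)=N(z,0)\,e^{-2a_0 t(z-z^{-1})}$; moreover $N\equiv0$ (equivalently $R_+\equiv0$) holds precisely when the solution is reflectionless, which under \eqref{eq:decay} means it is a pure $N$‑soliton.

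The first substantial step is to read off analyticity from the decay. The Jost solution $f_+$ solves a discrete Marchenko‑type Volterra equation whose kernel is dominated by the tails $\sigma(M,t):=\sum_{m\ge M}\bigl(|a(m,t)-a_0|+|b(m,t)-b_0|\bigr)$, and the standard estimates show that $f_+(z,n,t)z^{-n}$ is entire in $z$ with growth controlled by the decay rate of $\sigma(\cdot,t)$. A bound $\sigma(M,t_j)\le C\,M^{-(1+\delta)2M}$ makes the Taylor coefficients of this entire function decay like $k^{-2(1+\delta)k}$ up to constants, so $f_+(z,\cdot,t_j)$ is entire of order at most $\tfrac{1}{2(1+\delta)}<\tfrac12$; the factor $2$ in the exponent is exactly what pushes the order below $1/2$. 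Since $f_-(w,\cdot,t)$ is analytic for $|w|<1$ under \eqref{eq:decay} alone (no decay to the left is assumed), $N(\cdot,t_j)$ is analytic in $\{|z|>1\}$, continuous up to $|z|=1$, and of order $<1/2$ as $z\to\infty$.

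Now comes the rigidity argument. From $N(z,t_1)=N(z,t_0)\,e^{-2a_0(t_1-t_0)(z-z^{-1})}$, $t_1-t_0>0$ and $a_0>0$, there is a ray $\arg z=\theta_0$ along which $\re\bigl(-2a_0(z-z^{-1})\bigr)=c\,|z|\,(1+o(1))$ with $c>0$, hence $|N(z,t_1)|=|N(z,t_0)|\,e^{c|z|(1+o(1))}$ there. Assume $N(\cdot,t_0)\not\equiv0$. Split the Laurent expansion of $N(\cdot,t_0)$ at $\infty$ as $N(z,t_0)=E(z)+\rho(1/z)$, where $E$ collects the nonnegative powers of $z$ and is therefore entire of order $<1/2$, while $\rho$ is analytic in $|w|<1$. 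If $E\not\equiv0$, Wiman's $\cos\pi\varrho$ theorem gives $\min_{|z|=r}|E(z)|\to\infty$ along a sequence $r_n\to\infty$; if $E\equiv0$, then $N(z,t_0)=\rho(1/z)$ is comparable to a fixed negative power of $|z|$ along the ray. In either case $|N(z,t_0)|\ge e^{-c|z|/2}$ along a sequence tending to infinity on the ray, so $|N(z,t_1)|\ge e^{c|z|/2}(1+o(1))\to\infty$ --- an exponential rate of growth, contradicting that $N(\cdot,t_1)$ has order $<1/2$. Therefore $N(\cdot,t_0)\equiv0$; by $|T|^2+|R_\pm|^2=1$ also $R_-\equiv0$, so the solution is reflectionless for all $t$.

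It remains to rule out solitons. A pure $N$‑soliton with $N\ge1$ decays exactly exponentially, $|a(n,t)-a_0|+|b(n,t)-b_0|\asymp e^{-\kappa|n|}$ with $\kappa=\min_k\kappa_k>0$ determined by the $\zeta_k$, which is incompatible with the super‑exponential bound at $t_0$; hence $N=0$. A reflectionless solution with no eigenvalues has trivial scattering data, so $L(t)\equiv L_0$, i.e.\ $a\equiv a_0$ and $b\equiv b_0$. For the higher flows of the Toda hierarchy the argument is unchanged except that the dispersion relation is a Laurent polynomial $\omega_p(z)$ of degree $p\ge1$, so $e^{(t_1-t_0)\omega_p(z)}$ still grows of order $p\ge1>1/2$ along a suitable ray; the Kac--van Moerbeke hierarchy then follows via its standard Miura‑type correspondence with the Toda hierarchy. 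I expect the main difficulty to lie in the second and third paragraphs: making the implication "decay rate of $\sigma$ $\Rightarrow$ order of the entire continuation of $f_+$" quantitative and sufficiently uniform, and then running the Phragmén--Lindelöf/Wiman argument in the exterior of the unit disk --- it is there that the precise exponent $M^{-(1+\delta)2M}$, which forces order strictly below $\tfrac12$, is essential.
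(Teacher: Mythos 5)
Your overall strategy is the same as the paper's (inverse scattering, multiplicative time evolution of the reflection coefficient, a growth-versus-decay contradiction along a ray, then exclusion of solitons by their exact exponential decay), but the key quantitative step is claimed too strongly, and the rigidity argument built on it does not survive. The transformation-operator estimate for Jacobi operators controls the coefficient of $z^{j}$ in $f_+(z,n)z^{-n}$ by the tail of the coefficients starting at roughly $n+\lfloor j/2\rfloor$, not at $n+j$ (this halving is intrinsic to second-order difference equations, exactly as $K(x,y)$ is controlled by $\int_{(x+y)/2}^{\infty}|q|$ in the continuum). Hence the hypothesis $\sigma(M,t_j)\le C M^{-(1+\delta)2M}$ gives $|c_j|\lesssim (j/2)^{-(1+\delta)j}$, i.e.\ an extension of order $\tfrac{1}{1+\delta}$, which is $<1$ (exponential type zero) but \emph{not} $<\tfrac12$ unless $\delta\ge 1$. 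The factor $2$ in the exponent is what compensates this halving and brings the order below $1$; it does not push it below $\tfrac12$. Since the theorem only assumes some $\delta>0$, your appeal to Wiman's $\cos\pi\varrho$ theorem — which genuinely requires order $<\tfrac12$ to force the minimum modulus to be large along a sequence of circles — is unjustified for $0<\delta<1$, and with it the lower bound $|N(z,t_0)|\ge e^{-c|z|/2}$ along the chosen ray, which is the heart of your contradiction.

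The repair is exactly what the paper does at this point: with only exponential type zero in hand (indicator $h\le 0$ in all directions), invoke the indicator inequality $h_f(\varphi)+h_f(\varphi+\pi)\ge 0$ of \eqref{Corindicator} (a Phragm\'en--Lindel\"of consequence, valid for functions holomorphic outside a disc, as noted after \eqref{Corindicator}), which forces $h\equiv 0$ along the relevant pair of opposite directions and thus supplies, via the $\limsup$, the sequence on which $|R_+|$ (equivalently your $N$) is at least $e^{-\varepsilon r}$; the factor $e^{c(t_1-t_0)(z-z^{-1})}$ then makes the indicator at time $t_1$ strictly positive, contradicting type zero, so $R_+\equiv 0$. (One further small point in the paper that your sketch glosses over: for the higher flows the sign of the leading term $\pm|z|^{r+1}$ along the chosen direction depends on the parity of $r$, which is why the paper runs the argument forward in time for $r$ even and backward, via $R_+(k,0)=\E^{-\alpha_r}R_+(k,1)$, for $r$ odd.) Your final two steps — unitarity to get $R_-\equiv0$ and exclusion of a nontrivial $N$-soliton by its exponential spatial decay — agree with the paper and are fine.
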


\begin{remark}
(i). Of course there is an analogous result for the negative half-line.

(ii). Note that the two-sided condition \eqref{eq:decay} is owed to the fact that our proof relies on results from scattering theory,
where this condition appears naturally. It would be interesting to relax this condition to just boundedness. Moreover, it
would also be interesting to replace the constant solution by an arbitrary bounded solution. Unfortunately our method of proof
does not generalize to this situation.
\end{remark}

Moreover, there is also a similar result for the Kac--van Moerbeke lattice \cite{tjac} (again in Flaschka's variables)
\begin{equation}
\dot{\rho}(t,n)= \rho(n,t) \big( \rho(n+1,t)^2-\rho(n-1,t)^2\big).
\end{equation}

\begin{theorem}
Let $\rho_0>0$ and let $\rho(t)$ be a solution of the Kac--van Moerbeke lattice satisfying
\begin{equation}
\sum_{n\in\mathbb{Z}}|n| |\rho(n,t)-\rho_0|<\infty
\end{equation}
for one (and hence for all) $t\in\R$.
Suppose that for two different times $t_0<t_1$ and some constant $\delta>0$, we have
\begin{equation}
\sum_{n\geq M}\big(|\rho(n,t_j)-\rho_0|\big)\leq C\frac{1}{M^{(1+\delta)2M}},\quad M>0, \quad j\in\{0,1\}.
\end{equation}
Then
\begin{equation*}
\rho(n,t)=\rho_0, \quad (n,t)\in\Z\times\R.
\end{equation*}
\end{theorem}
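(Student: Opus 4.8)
The plan is to deduce Theorem~1.2 from Theorem~1.1 via the well-known Miura-type transformation linking the Kac--van Moerbeke and Toda hierarchies. Recall that if $\rho(n,t)$ solves the Kac--van Moerbeke lattice, then setting
\[
a(n,t) = \rho(2n,t)\rho(2n+1,t)/2, \qquad
b(n,t) = \tfrac12\big(\rho(2n-1,t)^2+\rho(2n,t)^2\big)
\]
(and similarly with a shift of index for the second factorization) produces a solution $(a,b)$ of the Toda lattice, with $\rho_0$ mapping to $a_0=\rho_0^2/2$, $b_0=\rho_0^2$. This is the standard factorization $L = AA^\top$ of the Jacobi operator into a product of a lower and upper bidiagonal operator, and it is what is used in \cite{tjac} to transfer results between the two hierarchies. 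So the first step is to invoke this transformation and verify that the decay hypothesis on $\rho$ transfers to the decay hypothesis on $(a,b)$.

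For the quantitative part, I would estimate: since $|a(n,t)-a_0| \le \tfrac12|\rho(2n,t)\rho(2n+1,t) - \rho_0^2|$ and this is controlled by $|\rho(2n,t)-\rho_0|$ and $|\rho(2n+1,t)-\rho_0|$ up to a constant depending on $\rho_0$ and the (bounded) sup-norm of $\rho$, we get
\[
\sum_{n\ge M}\big(|a(n,t_j)-a_0| + |b(n,t_j)-b_0|\big)
\le \ti C \sum_{m\ge 2M-1}|\rho(m,t_j)-\rho_0|
\le \ti C \cdot C\, (2M-1)^{-(1+\delta)2(2M-1)}.
\]
The right-hand side is certainly bounded by $\hat C M^{-(1+\delta')2M}$ for any $\delta' < \delta$ and $M$ large (the exponent $2(2M-1)\log(2M-1)$ dominates $2M\log M$), and for small $M$ one simply absorbs finitely many terms into the constant. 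Similarly the first-moment summability \eqref{eq:decay} for $(a,b)$ follows from $\sum_n |n||\rho(n,t)-\rho_0|<\infty$, so all hypotheses of Theorem~1.1 are met at the two times $t_0<t_1$.

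Applying Theorem~1.1 then gives $a(n,t)\equiv a_0$, $b(n,t)\equiv b_0$ for all $(n,t)$. It remains to conclude $\rho(n,t)\equiv\rho_0$. From $a(n,t)=a_0=\rho_0^2/2$ for all $n$ we get $\rho(2n,t)\rho(2n+1,t)=\rho_0^2$, and from $b(n,t)=b_0=\rho_0^2$ we get $\rho(2n-1,t)^2+\rho(2n,t)^2=2\rho_0^2$; combining these and using positivity of $\rho$ forces $\rho(n,t)=\rho_0$ for all $n$ (an elementary algebraic deduction: the two relations say the consecutive pairs are equal and their "geometric mean" is $\rho_0$). The main (and really only) obstacle is a bookkeeping one: making sure the index shift by a factor of $2$ in the Miura map does not spoil the precise form of the decay bound, but as indicated above the double-exponential decay has enormous room to spare, so this causes no difficulty.
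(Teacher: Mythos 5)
Your route is genuinely different from the paper's: the paper deduces the Kac--van Moerbeke result from its hierarchy-level Theorem~\ref{Ith2} via the embedding $\km_r(a)=\tl_{2r+1}(a,0)$ (Remark~\ref{rem:main}(ii)), so that $a=\rho$, $b=0$ directly and no transfer of the decay hypotheses is needed, whereas you use the Miura-type factorization to produce a genuine Toda-lattice solution and invoke only Theorem~1.1. That is a legitimate alternative (it buys you the lattice case instead of the whole hierarchy), and your transfer of the weighted $\ell^1$ condition and of the superexponential decay through the index doubling is correct — the bound $(2M-1)^{-(1+\delta)2(2M-1)}\le M^{-(1+\delta)2M}$ indeed holds with room to spare. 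One small normalization slip: with your factors $\tfrac12$ the transformed pair satisfies $\dot a=2a(b^+-b)$, $\dot b=4(a^2-(a^-)^2)$, i.e.\ the Toda lattice only after rescaling time by $2$; taking $a(n)=\rho(2n)\rho(2n+1)$, $b(n)=\rho(2n-1)^2+\rho(2n)^2$ gives the Toda lattice on the nose. This is harmless.

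The genuine gap is in your final step. The claimed ``elementary algebraic deduction'' is false: the product constraint lives on the pair $(2n,2n+1)$ while the sum constraint lives on the \emph{offset} pair $(2n-1,2n)$, and together they do \emph{not} force constancy. Indeed, setting $u_n=\rho(n)^2/\rho_0^2$, your relations read $u_{2n+1}=1/u_{2n}$ and $u_{2n}=2-u_{2n-1}$, and starting from $u_0=v>1$ one gets the positive non-constant solution $u_{2k}=\frac{k(v-1)+v}{k(v-1)+1}\to 1$, so the algebra alone admits a one-parameter family of counterexamples (decaying like $1/k$). The argument can be repaired in either of two ways: (a) apply Theorem~1.1 as well to the second (shifted) factorization you mention in passing, which yields $\rho(2n)^2+\rho(2n+1)^2=2\rho_0^2$ together with $\rho(2n)\rho(2n+1)=\rho_0^2$ on the \emph{same} pair, whence $(\rho(2n)-\rho(2n+1))^2=0$ and, by positivity, $\rho\equiv\rho_0$; or (b) observe that every non-constant solution of the two offset relations has $|\rho(n)-\rho_0|\asymp 1/n$, contradicting the standing hypothesis $\sum_n|n|\,|\rho(n,t)-\rho_0|<\infty$. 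As written, however, the last step of your proof does not follow.
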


In fact, in the next section we will prove this result for the entire Toda hierarchy which contains the Kac--van Moerbeke
as a special case (see Remark~\ref{rem:main}). Finally, we remark that our approach is inspired by the recent results from \cite{ART} and
\cite{JLMP} for the discrete Schr\"odinger equation.

\section{The main result}
\label{secth}

In this section we show that our main result extends to the entire Toda hierarchy (which will
cover the Kac--van Moerbeke hierarchy as well).
To this end, we introduce the Toda hierarchy using the standard Lax formalism
following \cite{bght} (see also \cite{ghmt}, \cite{tjac}).

Associated with two sequences $a^2(t)\neq 0, b(t)$ is a Jacobi operator
\begin{equation}\label{Hjacobi}
H(t)=a(t)S^++a^-(t)S^-+b(t)
\end{equation}
acting on sequences over $\mathbb{Z}$, where $S^{\pm}f(n)=f(n\pm1)$ are the usual shift operators. If we choose constants $c_0=1,c_j, 1\leq j\leq r$, $c_{r+1}=0$ and set
\begin{equation*}
P_{2r+2}(t)=\sum_{j=0}^rc_{r-j}\tilde{P}_{2j+2}(t),\qquad \tilde{P}_{2j+2}(t)=[H(t)^{j+1}]_+-[H(t)^{j+1}]_-,
\end{equation*}
where $[A]_{\pm}$ denotes the  upper and lower triangular parts of an operator with respect to the basis $\delta_m(n)=\delta_{m,n}$, here $\delta_{m,n}$ is the Kronecker delta.
Then the Toda hierarchy is given by the Lax equation
\begin{equation*}
\frac{d}{dt}H(t)-[P_{2r+2}(t),H(t)]=0,\qquad t\in\mathbb{R},
\end{equation*}
where $[A,B]=AB-BA$ is the usual commutator. Explicitly, setting
\begin{equation*}
\begin{split}
&g_j(n,t)=\sum_{l=0}^jc_{j-l}\tilde{g}_l(n,t),\quad \tilde{g}_l(n,t)=\langle\delta_n,H(t)^l\delta_n\rangle,\\
&h_j(n,t)=\sum_{l=0}^jc_{j-l}\tilde{h}_l(n,t),\quad \tilde{h}_l(n,t)=2a(n,t)\langle\delta_{n+1},H(t)^l\delta_n\rangle,
\end{split}
\end{equation*}
we obtain
\begin{equation}
\tl_r(a(t),b(t))=\begin{pmatrix}\dot{a}(t)-a(t)\big(g_{r+1}^+(t)-g_{r+1}(t)\big)\\ \dot{b}(t)-\big(h_{r+1}(t)-h_{r+1}^-(t)\big)\end{pmatrix}=0,\quad r\in\mathbb{N}_0.
\end{equation}
Here the dot denotes the derivative with respect to $t$ and $\mathbb{N}_0=\mathbb{N}\cup\{0\}$. Varying $r\in\N_0$ we obtain the Toda hierarchy and for $r=0$ we obtain
the Toda lattice \eqref{todeqfl}.
It is well known that the system $\text{TL}_r(a,b)=0$ with initial datum $(a_0,b_0)$ can be solved by using the inverse scattering transform. To this end suppose that
we have a solution of the Toda hierarchy, $\tl_r(a,b)=0$, satisfying
\begin{equation}
\sum_{n\in\mathbb{Z}}|n|\big(|a(n,t)-\tfrac{1}{2}|+|b(n,t)|\big)<\infty
\end{equation}
for one, and hence for all, $t\in\R$.

One introduces the scattering data
\begin{equation*}
S_\pm(H(t))=\{R_\pm(k,t),\text{ }|k|=1;\text{ }k_l,\gamma_{\pm,l}(t),\text{ }1\leq l\leq N\}
\end{equation*}
for the Jacobi operator $H(t)$. Here $R_\pm(k,t)$ are the left, right reflection coefficients, $\lambda_l= \frac{1}{2}(k_l+k_l^{-1})$ are the eigenvalues of $H(t)$, and
$\gamma_{\pm,l}(t)$ are the corresponding norming constants (see \cite[Chapter~11]{tjac} for precise definitions of these objects). Then the time evolution
of the scattering data is given by \cite[Theorem~13.8]{tjac}
\begin{equation}\label{eq:scatdat}
S_\pm(H(t))=\{R_\pm(k,0)\E^{\pm\alpha_r(k) t},\text{ }|k|=1;\text{ }k_l,\gamma_{\pm,l}(0)\E^{\pm\alpha_r(k_l)},\text{ }1\leq l\leq N\}
\end{equation}
where
\begin{equation*}
\alpha_r(k) = (k-k^{-1}) G_{0,r}\big(\frac{k+k^{-1}}{2}\big)
\end{equation*}
with $G_{0,r}(z)$ a monic polynomial of degree $r$ whose coefficients depend on the constants $c_j$ defining the Toda hierarchy (see \cite[Section~13.3]{tjac}).
The inverse scattering transform then amounts to computing the scattering data of the initial conditions $S_\pm(H(0))$ and then solving the inverse problem
to obtain the solution $(a(t),b(t))$ from $S_\pm(H(t))$ given via \eqref{eq:scatdat}.

Finally, we recall some properties of analytic functions, see also \cite{ART,JLMP}, which will play a crucial role in our proof.
We say that a function $f$ which is holomorphic outside a disc is of exponential type $\sigma_f$ if for $|z|$ big enough and some $\sigma>0$ we have
\begin{equation}
|f(z)|<\exp(\sigma|z|)
\end{equation}
In this case we define its indicator function by
\begin{equation}
h_f(\varphi)=\limsup_{r\to\infty}\frac{\log|f(r\E^{\I\varphi})|}{r}, \qquad \varphi\in[0,2\pi].
\end{equation}
It follows from the definition that if $f,g$ are two such functions, then
\begin{equation}\label{suma}
h_{f+g}\leq \max(h_f,h_g)
\end{equation}
and
\begin{equation}\label{product}
h_{fg}\leq h_f+f_g.
\end{equation}
Moreover, we also have the following identity
\begin{equation}\label{Corindicator}
h_f(\varphi)+h_f(\pi+\varphi)\geq 0.
\end{equation}
These facts are usually stated for entire functions. However, the key ingredient for the proof is the Phragm\'en--Lindel\"of theorem and thus one can easily adapt the proof of Theorem 1 from Chapter 8 in \cite{Levin} to show that it continuous to hold in the present situation. In particular, inequality \eqref{Corindicator} is still true in this case.

Now we are ready to establish our main result:

\begin{theorem}\label{Ith2}
Let $a_0>0$, $b_0\in\R$ be two given constants and
let $a(t),b(t)$ be a solution of the Toda hierarchy $\tl_r(a(t),b(t))=0$ satisfying
\begin{equation}
\sum_{n\in\mathbb{Z}}|n|\big(|a(n,t)-a_0|+|b(n,t)-b_0|\big)<\infty
\end{equation}
for one (and hence for all) $t\in\R$.
Suppose that for two different times $t_0<t_1$ and some constant $\delta>0$, we have
\begin{equation}\label{abdecay}
\sum_{n\geq M}\big(|a(n,t_j)-a_0|+|b(n,t_j)-b_0|\big)\leq C\frac{1}{M^{(1+\delta)2M}},\quad M>0, \quad j\in\{0,1\}.
\end{equation}
Then
\begin{equation*}
a(n,t)=a_0, \qquad b(n,t)=b_0, \quad (n,t)\in\Z\times\R.
\end{equation*}
\end{theorem}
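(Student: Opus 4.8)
The plan is to run the problem through the inverse scattering transform. Under this transform the super-exponential decay \eqref{abdecay} becomes an analyticity and growth statement for the scattering data, the time evolution \eqref{eq:scatdat} becomes multiplication by $\E^{\mp\alpha_r(k)t}$, and the desired rigidity is then squeezed out of the indicator inequalities \eqref{suma}--\eqref{Corindicator}. After an affine rescaling we may assume $a_0=\tfrac12$ and $b_0=0$.

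The first step is to convert the decay into analyticity of a scattering quantity attached to $+\infty$. Writing the right Jost solution $\psi_+(k,n,t)\sim k^n$ $(n\to+\infty)$ through its transformation operator, $\psi_+(k,n,t)=k^n\big(A_+(n,t)+\sum_{p\ge1}K_+(n,n+p,t)k^p\big)$, the usual kernel estimate $|K_+(n,n+p,t)|\le C\sum_{l\ge n+p/2}\big(|a(l,t)-\tfrac12|+|b(l,t)|\big)$ together with \eqref{abdecay} shows that at $t=t_j$ the coefficients decay in $p$ faster than $(p/2)^{-(1+\delta)p}$; hence $\psi_+(\cdot,n,t_j)$ extends to an \emph{entire} function of order $\le\frac1{1+\delta}<1$. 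Since the Jost solution attached to $-\infty$ is holomorphic in the physical sheet with only polynomial growth already from the global $\ell^1_1$ hypothesis, the Wronskian
\[
F_j(k):=W\big(\psi_-(1/k,\cdot,t_j),\psi_+(k,\cdot,t_j)\big),
\]
which on $|k|=1$ is a non-zero multiple of a reflection coefficient divided by the transmission coefficient, is well defined and holomorphic for $|k|>1$ (this is exactly where the one-sided hypothesis enters: without it $\psi_+(k,\cdot)$ would only live in $|k|<1$) and, there, of exponential type zero; in fact of order $<1$.

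Next I would combine the two times. The transmission coefficient is $t$-independent and the eigenvalue parameters $k_l$ are frozen, so \eqref{eq:scatdat} yields the identity $F_1(k)=F_0(k)\,\E^{-\alpha_r(k)\tau}$, $\tau:=t_1-t_0>0$, between functions holomorphic outside a disc. Suppose $F_0\not\equiv0$. Being of order $<1$, both $F_0,F_1$ satisfy $h_{F_j}\le0$, hence $h_{F_j}\equiv0$ by \eqref{Corindicator}; but \eqref{suma}--\eqref{product}, applied to $F_1=F_0\E^{-\alpha_r\tau}$ and to $F_0=F_1\E^{\alpha_r\tau}$, force $h_{F_1}=h_{F_0}+h_{\E^{-\alpha_r\tau}}$, so $h_{\E^{-\alpha_r\tau}}\equiv0$. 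This is false: $\alpha_r(k)$ has leading term $2^{-r}k^{r+1}$ at infinity, so, working with the indicator at the matching order $r+1$ (for which \eqref{suma}--\eqref{Corindicator} hold by the same Phragm\'en--Lindel\"of argument the excerpt invokes), $h_{\E^{-\alpha_r\tau}}(\varphi)=-2^{-r}\tau\cos\big((r+1)\varphi\big)\not\equiv0$. Therefore $F_0\equiv0$, i.e.\ $R_-(\cdot,t_0)\equiv0$, and then $R_+(\cdot,t_0)\equiv0$ too since $|R_+|=|R_-|$ on $|k|=1$. A reflectionless Jacobi operator satisfying the $\ell^1_1$ condition is a pure $N$-soliton operator; if $N\ge1$ it is non-constant and $|a(n,t_0)-\tfrac12|+|b(n,t_0)|$ decays only geometrically, $\asymp\kappa^{|n|}$ with $0<\kappa<1$, contradicting \eqref{abdecay}. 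Hence $N=0$, so $H(t_0)$ has the scattering data of the free Jacobi operator; by uniqueness of the inverse scattering transform $a(\cdot,t_0)=\tfrac12$, $b(\cdot,t_0)=0$, and since the constant sequences solve $\tl_r=0$, uniqueness for the Cauchy problem gives $a\equiv\tfrac12$, $b\equiv0$. Undoing the rescaling yields the theorem (the Kac--van Moerbeke case then follows via the standard factorization of the Jacobi operator, which turns a Kac--van Moerbeke solution into a Toda-hierarchy solution with the decay \eqref{abdecay} inherited).

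The step I expect to be the main obstacle is the first one: singling out a scattering quantity that is simultaneously governed by the behaviour at $+\infty$ in the strong sense required, extends under \eqref{abdecay} to a function holomorphic outside a disc of order $<1$, and transforms multiplicatively under \eqref{eq:scatdat}. This requires careful bookkeeping with the transformation-operator bounds and Wronskian identities of \cite{tjac} and a verification that the analytic continuation picks up no spurious exponential growth. A secondary technical point is checking that the indicator calculus \eqref{suma}--\eqref{Corindicator} survives the passage from exponential type to finite order $r+1$, which is precisely what lets the argument cover the whole Toda hierarchy and not just the Toda lattice itself.
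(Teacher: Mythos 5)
Your argument is correct and is essentially the paper's own proof: the one-sided super-exponential decay \eqref{abdecay} makes a reflection-type scattering quantity (you take the Wronskian $F_j\propto R_-/T$ built from the analytically continued Jost solution $\psi_+$, the paper takes $R_+(k^{-1},t_j)$ and cites \cite{ART} for the type bound you rederive from the transformation-operator estimates) holomorphic outside a disc and of exponential type zero at both times, the evolution \eqref{eq:scatdat} multiplies it by $\E^{\mp\alpha_r(k)\tau}$, the indicator calculus \eqref{suma}--\eqref{Corindicator} then forces it to vanish, and the reflectionless/soliton endgame together with \eqref{abdecay} gives the constant solution. The one point to repair is the parenthetical claim that \eqref{Corindicator} holds for the indicator of order $r+1$ with antipodal directions: for order $\rho>1$ the Phragm\'en--Lindel\"of argument only pairs directions $\pi/\rho$ apart (e.g.\ $\E^{-z^2}$ violates the antipodal version), but this is harmless here, since the order-one indicator of $F_j$ being identically zero already forces its order-$(r+1)$ indicator to vanish identically, which is all your contradiction uses -- equivalently one can argue along a single ray where $\re\alpha_r$ grows, with the even/odd-$r$ case distinction, exactly as the paper does.
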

\begin{proof}
Without loss of generality we choose $t_0=0$ and $t_1=1$. Moreover, by a simple transform $H\to \frac{1}{2a_0}(H-b_0)$
we can also assume $a_0=\tfrac{1}{2}$, $b_0=0$.
By \eqref{eq:scatdat} the time evolution of the reflection coefficient $R_+$ is given by
\begin{equation}
R_+(k,t)=R_+(k,0)\exp(\alpha_r(k)t),
\end{equation}
where $\alpha_r(k)=k^{r+1}-k^{-r-1}+\sum_{j=-r}^rd_jk^j$. Moreover, by \cite[eqs.~(2.15) and (2.18)]{ART} (note that $R_+(k)=\frac{\beta_+(k)}{\alpha(k)}$)
one has
\begin{equation}
\limsup_{|k|\to\infty}\frac{\log|R_+(k^{-1},t)|}{|k|}\leq 0,\qquad t\in\{0,1\}.
\end{equation}
Using \eqref{Corindicator}
\begin{equation*}
\begin{split}
&\limsup_{x\to\infty}\frac{\log|R_+( x^{-1},t)|}{x}\geq\\
& \limsup_{x\to\infty}\frac{\log|R_+( x^{-1},t)|}{x}+\limsup_{x\to\infty}\frac{\log|R_+( - x^{-1},t)|}{x} \geq 0,\qquad t\in\{0,1\}.
\end{split}
\end{equation*}
In particular, we conclude that $\limsup_{x\to\infty}\frac{\log|R_+( x^{-1},t)|}{x}=0$. A similar argument can be used to show $\limsup_{x\to\infty}\frac{\log|R_+( - x^{-1},t)|}{x}=0$.

On the other hand
\begin{equation*}
\begin{split}
&\limsup_{x\to\infty}\frac{\log|R_+( - x^{-1},1)|}{x}=\limsup_{x\to\infty}\frac{\log|R_+( - x^{-1},0)|}{x}+\\
&\limsup_{x\to\infty}\frac{\log|\exp(\alpha_r(- x^{-1})|}{x}=\lim_{x\to\infty}\frac{\log|\exp(x^{r+1}+O(x^{r}))|}{x} >0
\end{split}
\end{equation*}
Where in the last equality we have used that $r$ is even. If $r$ is odd apply the same argument to $R_+(k,0)=\exp(-\alpha_r)R_+(k,1)$.

In particular, this leads us to a contradiction and $R_+(k,0)=0$, whence $R_\pm(k,t)=0$. Consequently we have
a pure $N$ soliton solution (see \cite[eqn.~(14.109)]{tjac}) and since such a solution behaves like $a(n,t) -\frac{1}{2} \asymp k_1^{2|n|}$, where $\lambda_1=\frac{1}{2}(k_1+k_1^{-1})$ is the eigenvalue closest to
$[-1,1]$, we see that we get a contradiction to our assumption \eqref{abdecay} unless $N=0$.
\end{proof}

\begin{remark}\label{rem:main}
(i). By reflecting the coefficients $\tilde{a}(n,t)=a(-n-1,-t)$, $\tilde{b}(n,t)=b(-n,-t)$, which again satisfies $\tl_r(\ti{a}(t),\ti{b}(t))=0$,
we get a corresponding result on the negative half line.

(ii). Finally, since the Kac--van Moerbeke hierarchy can be obtained by setting $b=0$ in the odd equations of the
Toda hierarchy, $\km_r(a) = \tl_{2r+1}(a,0)$ (see \cite{mt}), this last result also covers the Kac--van Moerbeke hierarchy.
\end{remark}

\medskip

\noindent 
{\bf Acknowledgments.}
We are indebted to Yura Lyubarskii for discussions on this topic. 
I.\ A-R.\ gratefully acknowledges the hospitality of the Faculty of Mathematics, University of Vienna, Austria, during May, June 2016 where this research was initiated.

\end{document}